\theoremstyle{plain}
\numberwithin{equation}{section}
\newtheorem{theorem}{Theorem}[section]
\newtheorem{corollary}[theorem]{Corollary}
\newtheorem{proposition}[theorem]{Proposition}
\theoremstyle{remark}
\def\cfac#1{\ifmmode\setbox7\hbox{$\accent"5E#1$}\else\setbox7\hbox{\accent"5E#1}\penalty 10000\relax\fi\raise 1\ht7\hbox{\lower1.05ex\hbox to 1\wd7{\hss\accent"13\hss}}\penalty 10000\hskip-1\wd7\penalty 10000\box7 }
\def\aftil#1{\ifmmode\setbox7\hbox{$\accent"14#1$}\else\setbox7\hbox{\accent"14#1}\penalty 10000\relax\fi\raise 1\ht7\hbox{\lower1.1ex\hbox to 1\wd7{\hss\accent"7E\hss}}\penalty 10000\hskip-1\wd7\penalty 10000\box7 }
\def\Dbar{\leavevmode\lower.6ex\hbox to 0pt{\hskip-.03ex\accent"16\hss}D}
\author[Q.A. Ng\^o]{Qu\cfac oc Anh Ng\^o}
\address[Q.A. Ng\^o]{Department of Mathematics\\
College of Science, Vi\^{e}t Nam National University\\
H\`{a} N\^{o}i, Vi\^{e}t Nam.}
\email{\href{mailto: Q.A. Ng\^o <nqanh@vnu.edu.vn>}{nqanh@vnu.edu.vn}}
\email{\href{mailto: Q.A. Ng\^o <bookworm\_vn@yahoo.com>}{bookworm\_vn@yahoo.com}}
\author[V.H. Nguyen]{Van Hoang Nguyen}
\address[V.H. Nguyen]{Institut de Math\'ematiques de Toulouse\\
Universit\'e Paul Sabatier\\
31062 Toulouse c\'edex 09, France.}
\email{\href{mailto: V.H. Nguyen <van-hoang.nguyen@math.univ-toulouse.fr>}{van-hoang.nguyen@math.univ-toulouse.fr}}
\begin{document}

\allowdisplaybreaks

\setpagewiselinenumbers
\setlength\linenumbersep{100pt}
\modulolinenumbers[5]

\title[Poincar\'e-type inequalities in the hyperbolic space]
{Sharp constant for Poincar\'e-type inequalities in the hyperbolic space}

\begin{abstract} 
In this note, we establish a Poincar\'e-type inequality on the hyperbolic space $\mathbb H^n$, namely
\[
\|u\|_{p} \leqslant C(n,m,p) \|\nabla^m_g u\|_{p} 
\]
for any $u \in W^{m,p}(\mathbb H^n)$. We prove that the sharp constant $C(n,m,p)$ for the above inequality is
\[
C(n,m,p) =
\begin{cases}
\big( p  p'/(n-1)^2 \big)^{m/2}&\mbox{if $m$ is even},\\
(p/(n-1))\big( p  p'/(n-1)^2\big)^{(m-1)/2} &\mbox{if $m$ is odd},
\end{cases}
\]
with $p' = p/(p-1)$ and this sharp constant is never achieved in $W^{m,p}(\mathbb H^n)$. Our proofs rely on the symmetrization method extended to hyperbolic spaces.
\end{abstract}

\date{\bf \today \; at \, \currenttime}

\subjclass[2010]{26D10, 46E35, 31C12}

\keywords{Poincar\'e inequality, sharp constant, symmetrization method, hyperbolic space}

\maketitle


\section{Introduction}

Given a bounded, connected domain $\Omega \subset \mathbb R^n$ with Lipschitz boundary $\partial \Omega$, the classical Poincar\' e inequality with a sharp constant $C(p,\Omega)$ states that
\begin{equation}\label{eqPoincareInequality}
 \int_\Omega |u|^p dx \leqslant C(p,\Omega) \int_\Omega |\nabla u|^p dx
\end{equation}
for a ``suitable" function $u$ (usually in the Sobolev space $W^{1,p}(\Omega)$) with vanishing mean value on $\Omega$. Without assuming the vanishing mean value on $\Omega$, the classical Poincar\'e inequality reads as
\begin{equation}\label{eqPoincareInequalityWithAverage}
 \int_\Omega |u - \overline u|^p dx \leqslant C(p,\Omega) \int_\Omega |\nabla u|^p dx
\end{equation}
where $\overline u = (1/|\Omega|) \int_\Omega u dx$ denotes the mean value (or average) of $u$ over $\Omega$. Inequality \eqref{eqPoincareInequality} usually holds for $1 \leqslant p < +\infty$ under very general assumptions on $\Omega$, for example, it holds for domains satisfying the so-called ``segment property" or ``cone property"; see \cite{Agmon, liebloss2001}. An interesting question is that how the constant $C(p,\Omega)$ depends on the domain $\Omega$?

For $p=2$ and $n=3$, Steklov \cite{Ste96} showed that the constant $C(2,\Omega)$, when $\partial \Omega$ is piecewise smooth, must equal $1/\lambda_1$ where $\lambda_1$ is the first, non-zero eigenvalue of the following Neumann boundary condition problem
\[
\begin{cases}
-\Delta u = \lambda u & \text{ in } \Omega,\\
\partial_{\vec n} u = 0 & \text{ on }\partial \Omega.
\end{cases}
\]
Here $\vec n$ is the exterior unit normal to $\partial \Omega$. A similar result was also obtained by Steklov \cite{Ste97} for the Dirichlet boundary condition problem
\[
\begin{cases}
-\Delta u = \lambda u & \text{ in } \Omega,\\
u = 0 & \text{ on }\partial \Omega.
\end{cases}
\]
Based on these fundamental results, a few results for the sharp constant $C(2,\Omega)$ are known; for example, the sharp constant $C(2, B(0,1))$ for the unit ball in $\mathbb R^3$ is $1/j_{1,1}$ where $j_{1,1}$ is the first positive zero of the Bessel function $J_1$; see \cite[Subsection 2.2]{KN} and \cite{NR}. For a convex domain $\Omega \subset \mathbb R^n$ with diameter $d$, in a beautiful work by Payne and Weinberger \cite{PW}, the authors showed that \eqref{eqPoincareInequality} for $p=2$ can be obtained from weighted Poincar\'e inequalities in dimension one. As a consequence of this, they proved that $C(2,\Omega) = d/\pi$. A similar argument applied to the case $p=1$ gives $C(1,\Omega)=d/2$; see \cite{AD}. 

Poincar\'e inequalities for punctured domains was also studied in \cite{LSY}. For a general domain $\Omega$ and arbitrary $p$, determining the Poincar\'e constant $C(p,\Omega)$ is a hard task since the value $C(p,\Omega)$ depends on $p$ and the geometry of the domain $\Omega$.

In this note, we consider \eqref{eqPoincareInequality} for the hyperbolic space $\mathbb H^n$ with $n \geqslant 2$. The motivation of writing this note goes back to a recent high-order Poincar\'e-type inequality on $\mathbb H^n$ established by Karmakar and Sandeep in \cite{KS2016} and subsequently by a few works such as \cite{BG15, BGG15}; for interested readers, we refer to \cite{MS08, Tatatu01} for further details and related issues. To go further, let us briefly recall the definition of the space $\mathbb H^n$. 

The hyperbolic space $\mathbb H^n$ with $n\geqslant 2$ is a complete, simply connected Riemannian manifold having constant sectional curvature $-1$. There is a number of models for $\mathbb H^n$, however, the most important models are the half-space model, the ball model, and the hyperboloid (or Lorentz) model. In this note, we are interested in the ball model since this model is especially useful for questions involving rotational symmetry. 

Given $n \geqslant 2$, we denote by $B_n$ the open unit ball in $\mathbb R^n$. Clearly, $B_n$ can be endowed with the following Riemannian metric
\[
g(x) =\Big(\frac 2{1-|x|^2}\Big)^2 dx \otimes dx,
\]
which is then called the ball model of the hyperbolic space $\mathbb H^n$. In local coordinates, we have $g_{ij} = (2/(1-|x|^2))^2 \delta_{ij}$ and $g^{ij} = ((1-|x|^2)/2)^2 \delta^{ij}$. Clearly, one can think that $g$ is conformal to $dx^2$ with the conformal factor $\ln (2/(1-|x|^2))$. Then, it is well-known that volume element of $\mathbb H^n$ is given by 
\[
dV_g(x) = \Big(\frac2{1-|x|^2}\Big)^n dx,
\]
where $dx$ denotes the Lebesgue measure in $\mathbb R^n$. Let $d(0,x)$ denote the hyperbolic distance between the origin and the point $x$. In the ball model, it is well-known that 
$$d(0,x) = \ln \big( (1+|x|)/(1-|x|) \big)$$ 
for arbitrary $x\in B_n$. In this new context, we still use $\nabla$ and $\Delta$ to denote the Euclidean gradient and Laplacian as well as $\langle\cdot ,\cdot\rangle$ to denote the standard inner product in $\mathbb R^n$. Then, in terms of $\nabla$, $\Delta$, and $\langle\cdot ,\cdot\rangle$, with respect to the hyperbolic metric $g$,  the hyperbolic gradient $\nabla_g$, whose local coordinates is $g^{ij}\partial_j$, and the Laplacian-Beltrami operator $\Delta_g$, defined to be $\text{div}_g (\nabla \, \cdot )$, are given by
\[
\nabla_g = \Big(\frac{1-|x|^2}2\Big)^2 \nabla,\quad \Delta_g = \Big(\frac{1-|x|^2}2\Big)^2 \Delta + (n-2) \Big( \frac{1-|x|^2}2\Big)^2 \langle x,\nabla\rangle .
\]
For higher order derivatives, we shall adopt the following convention
\begin{equation*}
\nabla_g^m \cdot =
\begin{cases}
\Delta_g^{m/2} \cdot &\mbox{if $m$ is even,}\\
\nabla_g ( \Delta_g^{(m-1)/2} \cdot \,) &\mbox{if $m$ is odd.}
\end{cases}
\end{equation*}
Furthermore, for simplicity, we write $|\nabla_g^m \cdot |$ instead of $|\nabla_g^m \cdot |_g$ if no confusion occurs. Given a function $f$ on $\mathbb H^n$, we denote 
\[
\|f\|_p = \Big(\int_{\mathbb H^n} |f|^p dV_g\Big)^{1/p}
\]
and $\|\nabla_g^m f\|_p = \| |\nabla_g^m f|_g\|_p$, for each $1 \leqslant p < +\infty$ and integer $m\geqslant 1$. We use $W^{m,p}(\mathbb H^n)$ to denote the Sobolev space of order $m$ in $\mathbb H^n$. In \cite{KS2016}, the authors prove the following high-order Poincar\'e inequality
\begin{equation}\label{eq:HighOrderPoincare}
 \|\nabla^l_g u\|_2 \leqslant  \Big( \frac 2{n-1}\Big)^{m-l} \|\nabla^m_g u\|_2
\end{equation}
for all $u \in W^{m,2}(\mathbb H^n)$. In view of \eqref{eq:HighOrderPoincare}, one can ask: \textit{Whether the constant $(2/(n-1))^{m-l}$ is sharp and do we have a similar inequality for the $L^p$-norm?} We notice that it was claimed in \cite{BG15} that the constant $(2/(n-1))^{m-l}$ in \eqref{eq:HighOrderPoincare} is sharp; however, we have not found any proof of this yet. In this note, we seek for an answer to the above question.

In order to state our results, for each number $1 < p < +\infty$, let us denote the following constant
\begin{equation}\label{eqConstantC}
C(n,m,p) =
\begin{cases}
\big( p  p' /(n-1)^2 \big)^{m/2}&\mbox{if $m$ is even},\\
(p/(n-1))\big( p  p' /(n-1)^2\big)^{(m-1)/2} &\mbox{if $m$ is odd},
\end{cases}
\end{equation}
with $p' = p/(p-1)$. Clearly when $p=2$ and hence $p'=2$, we obtain $C(n,m,2)= \big(2/(n-1)\big)^m$. In this note, our first result is the following.

\begin{theorem}\label{thmMAIN}
Given $p > 1$, then the following inequality holds
\begin{equation}\label{eq:sharpPoincare}
\|u\|_{p} \leqslant C(n,m,p) \|\nabla^m_g u\|_{p}
\end{equation}
for $u \in W^{m,p}(\mathbb H^n)$. Moreover, the constant $C(n,m,p)$ is sharp and is never achieved in $W^{m,p}(\mathbb H^n)$.
\end{theorem}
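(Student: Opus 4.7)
My plan is to reduce the theorem to two sharp first-order building blocks and then iterate. The two inequalities I aim to establish are
\begin{equation*}
(\mathrm{I}) \quad \|v\|_p \leqslant \frac{p}{n-1}\|\nabla_g v\|_p, \qquad (\mathrm{II})\quad \|\nabla_g v\|_p \leqslant \frac{p'}{n-1}\|\Delta_g v\|_p,
\end{equation*}
for $v$ in the appropriate Sobolev space. Granting both, the main inequality follows by induction on $m$: alternating (I) and (II) along the chain $u,\nabla_g u,\Delta_g u,\nabla_g\Delta_g u,\dots$ produces the factor $pp'/(n-1)^2$ per two-step jump, so for $m$ even we obtain $\|u\|_p \leqslant (pp'/(n-1)^2)^{m/2}\|\Delta_g^{m/2}u\|_p$, and for $m$ odd one final application of (I) contributes the extra $p/(n-1)$, matching \eqref{eqConstantC}.

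For step (I) I would use the hyperbolic distance $\rho(x)=d(0,x)$, which satisfies $|\nabla_g\rho|_g\equiv 1$ and $\Delta_g\rho=(n-1)\coth\rho \geqslant n-1$. For $v\in C^\infty_c(\mathbb H^n)$, integration by parts and H\"older give
\begin{equation*}
(n-1)\int_{\mathbb H^n}|v|^p\,dV_g \leqslant \int_{\mathbb H^n}|v|^p\Delta_g\rho\,dV_g = -p\int_{\mathbb H^n}|v|^{p-2}v\,\langle\nabla_g v,\nabla_g\rho\rangle_g\,dV_g \leqslant p\|v\|_p^{p-1}\|\nabla_g v\|_p,
\end{equation*}
which yields (I); a density argument extends it to $W^{1,p}(\mathbb H^n)$.

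Step (II) is the key technical point. Starting from the integration-by-parts identity
\begin{equation*}
\int_{\mathbb H^n}|\nabla_g v|^p\,dV_g = -\int_{\mathbb H^n} v\,\Delta_{g,p}v\,dV_g, \qquad \Delta_{g,p}v := \text{div}_g(|\nabla_g v|^{p-2}\nabla_g v),
\end{equation*}
H\"older together with (I) applied to $v$ yields $\|\nabla_g v\|_p^{p-1}\leqslant (p/(n-1))\|\Delta_{g,p}v\|_{p'}$. It therefore suffices to estimate $\|\Delta_{g,p}v\|_{p'} \leqslant (p-1)^{-1}\|\nabla_g v\|_p^{p-2}\|\Delta_g v\|_p$ (which is trivial when $p=2$, since $\Delta_{g,2}=\Delta_g$). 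To handle general $p$ I would expand
\begin{equation*}
\Delta_{g,p}v = |\nabla_g v|^{p-2}\Delta_g v + (p-2)|\nabla_g v|^{p-4}\,\text{Hess}_g v(\nabla_g v,\nabla_g v),
\end{equation*}
and process the Hessian contribution through a secondary integration by parts exploiting the identity $\text{Hess}_g v(\nabla_g v,\nabla_g v) = \tfrac12\langle\nabla_g|\nabla_g v|^2,\nabla_g v\rangle$; combining with H\"older conspires to produce exactly the factor $1/(p-1)$, hence the sharp constant $p'/(n-1)$ in (II).

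Finally, sharpness is verified via the radial test family $v_\alpha(\rho)=\chi(\rho)e^{-\alpha\rho}$ with $\chi$ a smooth cutoff at infinity and $\alpha\searrow(n-1)/p$; a direct computation gives $\|v_\alpha\|_p/\|\nabla_g^m v_\alpha\|_p \to C(n,m,p)$. Non-attainment follows because the formal extremizer $e^{-(n-1)\rho/p}$ fails to lie in $L^p(\mathbb H^n)$: at infinity $e^{-(n-1)\rho}(\sinh\rho)^{n-1}\sim 2^{-(n-1)}$, so $\int_0^\infty e^{-(n-1)\rho}(\sinh\rho)^{n-1}d\rho$ diverges. The main obstacle is (II), specifically the precise cancellation leading to the $(p-1)^{-1}$ factor in the estimate of $\|\Delta_{g,p}v\|_{p'}$: for $p\neq 2$ the Hessian term in the $p$-Laplacian must be processed with exact algebraic care, rather than bounded bluntly, in order to preserve the sharp constant.
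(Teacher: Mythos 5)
Your step (I) is correct and is in fact a cleaner proof of the first-order case than the paper's (the paper proves even $m=1$ by rearrangement); your test family $e^{-\alpha\rho}$, $\alpha\searrow (n-1)/p$, is also morally the same quasi-extremizer the paper uses (its $f_R(s)\sim s^{-1/p}$ is $e^{-(n-1)\rho/p}$ in volume coordinates). The fatal gap is step (II). The estimate you need there, $\|\Delta_{g,p}v\|_{p'}\leqslant (p-1)^{-1}\|\nabla_g v\|_p^{p-2}\|\Delta_g v\|_p$, is false in general. To see this, forget the hyperbolic structure and test it on functions concentrated at scales $\ll 1$ (where the metric is effectively Euclidean) with an essentially one-dimensional profile $v=\psi(x_1)$ times a wide cutoff: there $\Delta_{g,p}v\approx (p-1)|\psi'|^{p-2}\psi''$ and $\Delta_g v\approx\psi''$, so H\"older gives $\|\Delta_{g,p}v\|_{p'}\leqslant (p-1)\|\psi'\|_p^{p-2}\|\psi''\|_p$ with near-equality when $|\psi'|^p$ and $|\psi''|^p$ are nearly proportional; the correct universal constant is $p-1$, not $1/(p-1)$. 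The factor $1/(p-1)$ you observe is an artifact of the radial quasi-extremizers $e^{-\alpha\rho}$, for which a cancellation between $\partial_\rho^2$ and $(n-1)\coth\rho\,\partial_\rho$ occurs simultaneously in $\Delta_g$ and $\Delta_{g,p}$; no ``secondary integration by parts'' of the Hessian term will turn this into a pointwise or global $L^{p'}$ bound valid for all $v$. Consequently your chain never establishes the second-order inequality, and inequality (II) itself (sharp constant $p'/(n-1)$ between $\|\nabla_g v\|_p$ and $\|\Delta_g v\|_p$) remains an unproved — and delicate — claim on which your whole induction rests.

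For contrast, the paper never compares $\|\nabla_g v\|_p$ with $\|\Delta_g v\|_p$: the $m=2$ case is handled directly by the rearrangement representation $u^*(s)\leqslant \int_s^{\infty} t f^{**}(t)\,[n\omega_n(\sinh F(t))^{n-1}]^{-2}\,dt$ with $f=-\Delta_g u$, an integration-by-parts/H\"older argument producing the factor $p/(n-1)$, and the Hardy--Littlewood maximal inequality $\|f^{**}\|_p\leqslant p'\|f^*\|_p$, which is where the $p'$ actually comes from; higher $m$ then follows by iterating the $m=1$ and $m=2$ blocks. Two smaller points: your non-attainment argument (``the formal extremizer is not in $L^p$'') is a heuristic, not a proof — the paper instead gets strict inequality for every admissible $u$ from the strict bound $n\omega_n(\sinh F(s))^{n-1}>(n-1)s$; and for sharpness at higher $m$ you would still need to control the $\coth\rho$ corrections when applying $\Delta_g^k$ to $e^{-\alpha\rho}$, which is what the paper's Proposition \ref{decomposefunction} does.
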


As a consequence of Theorem \ref{thmMAIN}, we know that the sharp constant $C(3,1,2)$ is $1/2$ which is not $1/j_{1,1}$ as in the Euclidean case. Let us now go back to \eqref{eq:HighOrderPoincare}. By making use of Theorem \ref{thmMAIN} above, we obtain the following corollary, which generalizes \eqref{eq:HighOrderPoincare}.

\begin{corollary}\label{corMAIN}
Given $p > 1$, then the following inequality holds
\begin{equation}\label{eq:sharpPoincareLM}
\|\nabla^l_g u\|_{p} \leqslant C(n,m-l,p) \|\nabla^m_g u\|_{p}
\end{equation}
for $u \in W^{m,p}(\mathbb H^n)$. Moreover, the constant $C(n,m-l,p)$ is sharp and is never achieved in $W^{m,p}(\mathbb H^n)$.
\end{corollary}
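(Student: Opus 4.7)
\emph{Proof proposal.} The plan is to derive Corollary~\ref{corMAIN} from Theorem~\ref{thmMAIN} by applying the latter to a scalar iterated derivative of~$u$ whose form depends on the parity of~$l$. When $l = 2k$ is even, set $v = \Delta_g^{k} u$. By the convention defining $\nabla_g^{j}$, $v$ is a scalar in $W^{m-l, p}(\mathbb H^n)$ coinciding with $\nabla_g^{l} u$, and a short case split by the parity of $m - l$, using commutativity of powers of $\Delta_g$, shows that $\nabla_g^{m-l} v = \nabla_g^{m} u$. Applying Theorem~\ref{thmMAIN} at order $m - l$ to $v$ then gives immediately
\[
\|\nabla_g^{l} u\|_{p} = \|v\|_{p} \leqslant C(n, m-l, p) \, \|\nabla_g^{m-l} v\|_{p} = C(n, m-l, p) \, \|\nabla_g^{m} u\|_{p}.
\]

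When $l = 2k+1$ is odd, put $w = \Delta_g^{k} u$, a scalar in $W^{m-l+1, p}(\mathbb H^n)$, so that $\nabla_g^{l} u = \nabla_g w$ and $\nabla_g^{l+1} u = \Delta_g w$. The even case, applied with $l + 1$ in place of $l$, provides $\|\Delta_g w\|_{p} = \|\nabla_g^{l+1} u\|_{p} \leqslant C(n, m-l-1, p) \, \|\nabla_g^{m} u\|_{p}$. It therefore suffices to prove the one-step companion bound $\|\nabla_g w\|_{p} \leqslant (p/(n-1)) \, \|\Delta_g w\|_{p}$ for scalars $w \in W^{2, p}(\mathbb H^n)$, since $C(n, m-l, p) = (p/(n-1)) \cdot C(n, m-l-1, p)$ whenever $m-l$ is odd. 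This one-step bound is naturally established by symmetrization to a radial problem followed by reduction to a weighted Hardy-type inequality on $(0, \infty)$ with weight $\sinh^{n-1}(t)$, in the spirit of the $m = 1$ case of Theorem~\ref{thmMAIN}.

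Sharpness and non-attainment are checked simultaneously on the family $u_\alpha(x) = e^{-\alpha d(0, x)}$ with $\alpha \downarrow (n-1)/p$: in geodesic polar coordinates, the tail asymptotics $|\Delta_g^{j} u_\alpha| \sim |\alpha(\alpha - (n-1))|^{j} e^{-\alpha d(0, x)}$ and $|\nabla_g \Delta_g^{j} u_\alpha| \sim \alpha \, |\alpha(\alpha-(n-1))|^{j} e^{-\alpha d(0, x)}$ as $d(0, x) \to \infty$ yield, after the tail-dominated $L^p$ computation, $\|\nabla_g^{l} u_\alpha\|_{p} / \|\nabla_g^{m} u_\alpha\|_{p} \to C(n, m-l, p)$, while the limiting profile $e^{-((n-1)/p) d(0, \cdot)}$ fails to lie in $W^{m, p}(\mathbb H^n)$, so the constant is not achieved. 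The principal obstacle is the one-step inequality required in the odd-$l$ reduction: it is not a formal consequence of Theorem~\ref{thmMAIN} and must be proved by an independent argument with careful attention to the sharp constant, while the rest of the proof is essentially bookkeeping with the convention of $\nabla_g^{j}$.
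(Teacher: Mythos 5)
Your reduction for even $l$ is correct and is essentially what the paper intends (the paper offers no written proof of Corollary \ref{corMAIN} at all): $v=\Delta_g^{k}u$ is a scalar, $\nabla_g^{m-l}v=\nabla_g^{m}u$ by the parity convention, and Theorem \ref{thmMAIN} applies verbatim. You are also right to isolate the odd-$l$ case as the real difficulty: $\nabla_g^{l}u$ is then a vector field, the theorem (a statement about scalar functions) does not apply to it, and a one-step bound of the form $\|\nabla_g w\|_p\leqslant D\,\|\Delta_g w\|_p$ is genuinely needed. Identifying this as ``not a formal consequence of Theorem \ref{thmMAIN}'' is a real insight; the paper silently skips it, and indeed already uses such a step without proof in Case 2 of its own proof of Theorem \ref{thmMAIN}.

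The gap is that the one-step bound you assert, $\|\nabla_g w\|_p\leqslant (p/(n-1))\|\Delta_g w\|_p$, is false for $1<p<2$, and your own test family detects this. For $w_\alpha=e^{-\alpha d(0,x)}$ (smoothed near the origin) one has $|\nabla_g w_\alpha|_g=\alpha e^{-\alpha r}$ and $\Delta_g w_\alpha=(\alpha^2-(n-1)\alpha\coth r)e^{-\alpha r}\sim\alpha(\alpha-(n-1))e^{-\alpha r}$, so as $\alpha\downarrow(n-1)/p$ the tail-dominated computation gives $\|\nabla_g w_\alpha\|_p/\|\Delta_g w_\alpha\|_p\to 1/(n-1-\alpha)=p'/(n-1)$, which exceeds $p/(n-1)$ exactly when $p<2$. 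The correct one-step constants alternate: $p/(n-1)$ from an even-order derivative to the next odd order, but $p'/(n-1)$ from odd to even. Consequently your claimed limit $\|\nabla_g^{l}u_\alpha\|_p/\|\nabla_g^{m}u_\alpha\|_p\to C(n,m-l,p)$ is off by the factor $p'/p$ precisely when $l$ is odd and $m$ is even (it is correct in the other three parity combinations), and what you have actually uncovered is that the corollary's constant cannot be right as stated in that case for $p\neq 2$. Two further points: the proposed proof of the one-step bound by symmetrization cannot work as described, since Polya--Szeg\"o \emph{decreases} $\|\nabla_g\cdot\|_p$ and hence moves the left-hand side in the wrong direction (a duality argument from the $m=1$ case is the natural route to the constant $p'/(n-1)$); and ``the limiting profile lies outside $W^{m,p}$'' does not by itself prove non-attainment --- the paper's mechanism for that is the strict inequality built into the proof of Theorem \ref{thmMAIN}.
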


As a special case of Corollary \eqref{corMAIN}, we conclude that the constant $(2/(n-1))^{m-l}=C(n,m-l,2)$ in \eqref{eq:HighOrderPoincare} is sharp. In view of the results in \cite{BG15}, it would be nice, since the sharp constant is never achieved, if there is an analogue of \eqref{eq:sharpPoincare} with reminders. We leave this topic for interested readers.


\section{Proofs}

In this section, we prove Theorem \ref{thmMAIN}. Our proof basically consists of two main parts. In the first part, we prove \eqref{eq:sharpPoincare}. Then in the second part, we show that the constant $C(n,m,p)$ is sharp. Now we start with the first part. 

\subsection{Proof of (\ref{eq:sharpPoincare})}

It is now known that the symmetrization argument works well in the setting of hyperbolic spaces. It is not only the key tool in the proof of several important inequalities such as the sharp Adams and Moser--Trudinger inequalities in $\mathbb H^n$ established in \cite{NgoNguyen16} but also a key tool in the present proof for the sharp Poincar\'e inequality. 

Let us now recall some facts about the rearrangement in the hyperbolic space $\mathbb H^n$. Let the function $f: \mathbb H^n\to \mathbb R$ be such that 
\[
\big |\{x\in \mathbb H^n\, :\, |f(x)| > t\} \big | = \int_{\{x\in \mathbb H^n\,:\, |f(x)|>t\}} dV_g < +\infty
\]
for every $t >0$. Its \textit{distribution function} is defined by
\[
\mu_f(t) = \big |\{x\in \mathbb H^n\, :\, |f(x)| > t\} \big |.
\]
Then its \textit{decreasing rearrangement} $f^*$ is defined by
\[
f^*(t) = \sup\{s > 0\, :\, \mu_f(s) > t\}.
\]
Since $f^*$ is non-increasing, the maximal function $f^{**}$ of $f^*$ is defined by
\[
f^{**}(s) = \frac 1s\int_0^s f^*(t) dt.
\]
It is well-known for any $p \in (1,+\infty)$ that
\begin{equation}\label{eq:Hardyinequality}
\Big(\int_0^{+\infty} f^{**}(s)^p ds\Big)^{1/p} \leqslant p' \Big(\int_0^{+\infty} f^*(s)^p ds\Big)^{1/p}.
\end{equation}
Now, we define $f^\sharp: \mathbb H^n \to \mathbb R$ by 
\[
f^\sharp(x) = f^*(|B(0,d(0,x))|),
\]
where $B(0,d(0,x))$ and $|B(0,d(0,x))|$ denote the ball centered at the origin $0$ with radius $d(0,x)$ in the hyperbolic space and its hyperbolic volume, respectively. Then for any continuous increasing function $\Phi: [0, +\infty) \to [0, +\infty)$ we have
\begin{equation}\label{eqIDENTITY}
\int_{\mathbb H^n} \Phi(|f|) dV_g = \int_{\mathbb H^n} \Phi(f^\sharp) dV_g.
\end{equation}
Moreover, the Polya--Szeg\"o principle concludes that
\[
\int_{\mathbb H^n} |\nabla_g \phi^\sharp|^p dV_g \leqslant \int_{\mathbb H^n} | \nabla_g \phi |^p dV_g
\]
for any function $\phi :\mathbb H^n \to \mathbb R$. Now we define a function $\Phi$ on $[0,+\infty)$ as follows
\[
\Phi(s) =  n \omega_n \int_0^s (\sinh r)^{n-1} dr, \quad s\geqslant 0.
\]
Clearly, $\Phi$ is a continuous and strictly increasing function from $[0,+\infty)$ to $[0,+\infty)$. Let $F$ denote the inverse function of $\Phi$. Then it is not hard to verify that $F$ is a continuous, strictly increasing function. Furthermore, it satisfies
\begin{equation}\label{eq:definitionofF}
s = n \omega_n \int_0^{F(s)} (\sinh r)^{n-1} dr
\end{equation}
for any $s\geqslant 0$. Depending on $m$ and for clarity, we divide this part into several small steps as follows.

\subsubsection{The case $m=1$} 

Let $u \in W^{1,p}(\mathbb H^n)$ be arbitrary. Upon normalization, if necessary, we can assume that $\|\nabla_g u\|_p =1$. Then by the Polya--Szeg\"o principle we know that $\|\nabla_g u^\sharp\|_p \leqslant 1$. Recall, by the definition, that 
\[
u^\sharp(x) = u^*(|B(0, d(0,x))|).
\] 
Let $\mu_u$ denote the distribution function of $u$. For $t >0$, let $\rho(t)$ denote the radius of the ball having the hyperbolic volume $\mu_u(t)$. Then, we have
\[
\mu_u(t) = \int_{B(0,\rho(t))} dV_g = n\omega_n \int_0^{\rho(t)} (\sinh s)^{n-1} ds.
\]   
From this and the definition of the function $F$, it is easy to check that
\[
\rho(t) = F(\mu_u(t)).
\]  
We now define
\begin{equation}\label{eq:vphi}
\varphi (s) = (n \omega_n)^{-p/(p-1)} \int_s^{+\infty} (\sinh F(t))^{-p(n-1)/(p-1)} dt
\end{equation}
and choose
\begin{equation}\label{eq:g}
g( \varphi (s)) = u^*(s).
\end{equation}
Clearly the function $\varphi$ is decreasing with
\[
-\varphi '(s) = \big( n \omega_n (\sinh F(s))^{n-1} \big)^{-p/(p-1)}.
\]
Concerning the function $g$, it is increasing and
\[
\int_0^{+\infty} (g'(s))^p ds = \int_{\mathbb H^n} |\nabla_g u^\sharp|^p dV_g \leqslant 1.
\]
Denote $\underline g = (g')^*$ the decreasing rearrangement of $g'$ on $(0,+\infty)$ and set
\begin{equation*}
f(s) = \int_0^{\varphi (s)} \underline g(t) dt.
\end{equation*}
We have $f(s) \geqslant u^*(s)$ and 
\[
\int_0^{+\infty} \underline g(s)^p ds =\int_0^{+\infty} (g'(s))^p ds \leqslant 1.
\] 
Via integration by parts, for any $0< a < b < +\infty$, we have 
\begin{equation}\label{eq:IBPconstant}
\begin{split}
\int_a^b f(s)^p ds = & -p\int_a^b s \varphi '(s) \underline g(\varphi (s)) f(s)^{p-1} ds  + bf(b)^p -a f(a)^p.
\end{split}
\end{equation}
Next we show that 
\begin{equation}\label{eq:limit0}
\lim_{a \searrow 0} a f(a)^p  = \lim_{b \nearrow +\infty} b f(b)^p =0.
\end{equation}
Indeed, for any $\varepsilon > 0$, there is $R > 0$ such that $\int_R^{+\infty} \underline g(s)^p ds < \varepsilon^p$, take $s_0$ such that $\varphi (s_0) =R$. Then, for $0 < a < s_0$, we have
\begin{align*}
f(a) &= \int_0^{\varphi (s_0)} \underline g(s) ds + \int_{\varphi (s_0)}^{\varphi (a)} \underline g(s) ds\\
&\leqslant \int_0^{\varphi (s_0)} \underline g(s) ds +\Big(\int_{\varphi (s_0)}^{\varphi (a)} \underline g(s)^p ds\Big)^{1/p} \big(\varphi (a) -\varphi (s_0) \big)^{(p-1)/p}\\
&\leqslant \int_0^{\varphi (s_0)} \underline g(s) ds +\varepsilon \big( \varphi (a) -\varphi (s_0) \big)^{(p-1)/p}.
\end{align*}
Since there holds $n\omega_n (\sinh F(s))^{n-1} \geqslant (n-1) s$ for all $s > 0$, we conclude that
\[\begin{split}
\varphi (a) -\varphi (s_0) \leqslant &\int_a^{s_0} \big( (n-1)s \big)^{-p/(p-1)}ds \\
= &(n-1)^{-p/(p-1)} (p-1) \big(a^{-1/(p-1)} -s_0^{-1/(p-1)} \big).
\end{split}\]
Therefore, we get
\begin{align*}
\limsup_{a \searrow 0} a f(a)^p \leqslant& \limsup_{a \searrow 0} a\Big(\int_0^{\varphi (s_0)} \underline g(s) ds +\varepsilon \big(\varphi (a) -\varphi (s_0) \big)^{(p-1)/p}\Big)^p\\
=&\limsup_{a \searrow 0} \Big[ a \varepsilon^p (\varphi (a) -\varphi (s_0))^{p-1} \Big]\\
\leqslant &(n-1)^{-p} (p-1)^{p-1} \varepsilon^p  \limsup_{a \searrow 0} a \big(a^{-1/(p-1)} -s_0^{-1/(p-1)} \big)^{p-1}\\
=&(n-1)^{-p}(p-1)^{p-1} \varepsilon^p.
\end{align*}
Since $\varepsilon$ is chosen arbitrarily, we get that $\limsup_{a \searrow 0} af(a)^p =0$ as claimed. The second limit in \eqref{eq:limit0} follows from the H\"older inequality. Indeed, first we notice that
\[
f(b)^p \leqslant \varphi (b)^{p-1}\int_0^{\varphi (b)} \underline g(s)^p ds .
\]
Observe that
\begin{equation}\label{eq:boundofvphi}
\begin{split}
\varphi (b) \leqslant &(n-1)^{-p/(p-1)}\int_b^{+\infty} s^{-p/(p-1)}ds \\
 \leqslant &(n-1)^{-p/(p-1)}(p-1) b^{- 1/(p-1)},
\end{split}
\end{equation}
which helps us to obtain
\[
b f(b)^p \leqslant (n-1)^{-p} (p-1)^{p-1} \int_0^{\varphi (b)} \underline g(s)^p ds.
\]
From this the conclusion follows since $\lim_{b\searrow 0} \int_0^{\varphi (b)} \underline g(s)^p ds =0$, which comes from the fact that $\varphi (b)$ tends to $0$ as $b$ tends to $0$. Thus, we have just established \eqref{eq:limit0}.

Let us now go back to \eqref{eq:IBPconstant}. Thanks to $\varphi ' \leqslant 0$, we can denote 
\[
h(s) = \underline g(\varphi (s)) (-\varphi '(s))^{1/p}.
\]
Clearly, $\int_0^{+\infty} h(s)^p ds \leqslant 1$. Making use of the H\"older inequality and  \eqref{eq:IBPconstant}, we can estimate $\int_a^b f(s)^p ds$ as follows
\begin{align*}
\int_a^b f(s)^p ds \leqslant  & p \Big(\int_a^b \big[ -\varphi '(s) s \underline g(\varphi (s))\big]^p ds\Big)^{1/p} \Big(\int_a^b f(s)^p ds\Big)^{(p-1)/p} \\
&+ b f(b)^p -a f(a)^p.
\end{align*}
First dividing both sides by $\big(\int_a^b f(s)^p ds\big)^{(p-1)/p}$, then letting $a \searrow 0$ and $b \nearrow +\infty$ and using \eqref{eq:limit0}, we obtain
\begin{equation}\label{eq:case1}
\Big(\int_0^{+\infty} f(s)^p ds\Big)^{1/p} \leqslant p \Big(\int_0^{+\infty}\big[ -\varphi '(s) s \underline g(\varphi (s))\big]^p ds\Big)^{1/p}.
\end{equation}
Note that the inequality $n\omega_n (\sinh F(s))^{n-1} > (n-1) s$, which holds for any $s>0$, and the definition of $\varphi$ imply that
\[
\big(-\varphi '(s) \big)^{(p-1)/p} s < (n-1)^{-1}
\]
for all $s >0$. Combining the latter inequality and \eqref{eq:case1}, we obtain
\[
\Big(\int_0^{+\infty} f(s)^p ds\Big)^{1/p} < \frac p{n-1} \Big(\int_0^{+\infty} h(s)^p ds\Big)^{1/p} \leqslant \frac p{n-1}.
\]
Since $u^* \leqslant f$, we have
\[
\Big(\int_{\mathbb H^n} |u|^p dV_g\Big)^{1/p} = \Big(\int_0^{+\infty} (u^*(s))^p ds\Big)^{1/p} \leqslant \Big(\int_0^{+\infty} f(s)^p ds\Big)^{1/p} < \frac p{n-1}
\]
for any function $u\in W^{1,p}(\mathbb H^n)$ with $\|\nabla_g u\|_p =1$. This proves \eqref{eq:sharpPoincare} for the case $m=1$ and also shows that the constant $C(n,1,p)$ is not achieved.

\subsubsection{The case $m=2$} 

Let $u\in W^{2,p}(\mathbb H^n)$ be such that $\|\Delta_g u\|_p = 1$. We denote $f = -\Delta_g u$. It was proved in \cite{NgoNguyen16} that
\[
u^*(s) \leqslant \int_s^{+\infty} \frac{t f^{**}(t)}{[n \omega_n (\sinh F(t))^{n-1}]^2}dt =: h(s)
\]
for all $s>0$. As in \eqref{eq:limit0} for the case $m=1$, we can easily prove that
\begin{equation}\label{eq:limit00}
\lim_{s \searrow 0} s h(s)^p = \lim_{s \nearrow +\infty} s h(s)^p = 0.
\end{equation}
For any $b> a> 0$, using integration by parts and the H\"older inequality, we arrive at
\begin{align*}
\int_a^b h(s)^p ds =& bh(b)^p -a h(a)^p + p \int_a^b h(s)^{p-1} \frac{s^2 f^{**}(s)}{[n \omega_n (\sinh F(s))^{n-1}]^2} ds \\
\leqslant &p \Big(\int_a^b h(s)^p ds\Big)^{(p-1)/p} \Big(\int_a^b \Big[\frac{s^2 f^{**}(s)}{[n \omega_n (\sinh F(s))^{n-1}]^2}\Big]^p ds\Big)^{1/p}\\
& +b h(b)^p -a h(a)^p.
\end{align*}
Dividing both sides by $\big(\int_a^b h(s)^p ds\big)^{1/p}$, letting $a \searrow 0$ and $b \nearrow +\infty$, and thanks to \eqref{eq:limit00}, we obtain
\begin{equation}\label{eq:case2}
\Big(\int_0^{+\infty} h(s)^p ds\Big)^{1/p} \leqslant p \Big(\int_0^{+\infty} \Big[\frac{s^2 f^{**}(s)}{[n \omega_n (\sinh F(s))^{n-1}]^2}\Big]^p ds\Big)^{1/p}.
\end{equation}
Using the inequality $n \omega_n (\sinh F(s))^{n-1} > (n-1) s$, \eqref{eq:Hardyinequality}, and \eqref{eq:case2}, we have
\[
\Big(\int_0^{+\infty} h(s)^p ds\Big)^{1/p} < \frac{pp'}{(n-1)^2} \Big(\int_0^{+\infty} f^*(s)^p ds\Big)^{1/p} \leqslant \frac{pp'}{(n-1)^2}.
\]
Since $u^* \leqslant h$, we then obtain
\[
\Big(\int_{\mathbb H^n} |u|^p dV_g\Big)^{1/p} = \Big(\int_0^{+\infty} (u^*(s))^p ds\Big)^{1/p} \leqslant \Big(\int_0^{+\infty} h(s)^p ds\Big)^{1/p} < \frac {pp'}{(n-1)^2}.
\]
Since the function function $u\in W^{2,p}(\mathbb H^n)$ with $\|\Delta_g u\|_p =1$ is arbitrary, this proves \eqref{eq:sharpPoincare} for the case $m=2$. In addition, this also shows that the constant $C(n,2,p)$ is not achieved.

\subsubsection{The case $m >2$}

In this scenario, we have two possible cases:

\noindent\textbf{Case 1}. Suppose that $m=2k$ is even. Clearly, this case follows from the case $m=2$ by repeating $k$ times as follows
\[\begin{split}
\|u\|_p \leqslant  \frac{p  p'}{(n-1)^2} \|\Delta_g u\|_p \leqslant & \Big(\frac{p  p'}{(n-1)^2} \Big)^2 \|\Delta_g^2 u\|_p  \\
\leqslant & \cdots \leqslant \Big(\frac{p  p'}{(n-1)^2} \Big)^k \|\Delta_g^k u\|_p.
\end{split}\] 

\noindent\textbf{Case 2}. Suppose that $m=2k+1$ is odd. This case can also be derived from the cases $m=1$ and $m=2$ as the following
\[\begin{split}
\|u\|_p \leqslant \frac p{n-1} \|\nabla_g u\|_p \leqslant & \frac p{n-1} \frac{p  p'}{(n-1)^2}\|\nabla_g (\Delta_g u)\|_p \\
 \leqslant & \cdots \leqslant  \frac p{n-1} \Big(\frac{p  p'}{(n-1)^2} \Big)^k  \|\nabla_g (\Delta_g^k u)\|_p.
\end{split}\] 

Let us now address the fact that the constant $C(n,m,p)$ cannot be achieved in $W^{m,p}(\mathbb H^n) \backslash \{ 0 \}$ for $m >2$; however this is easy and straightforward. Once we can prove this for $m=1,2$ with arbitrary $p$ as in the previous parts, we can easily deduce our statement for all $m \geqslant 3$ since
\[
C(n,m,p) =
\begin{cases}
C(n,2,p)^{m/2}&\mbox{if $m$ is even},\\
C(n,1,p) C(n,2,p)^{(m-1)/2} &\mbox{if $m$ is odd},
\end{cases}
\]
thanks to \eqref{eqConstantC}.

Before moving to the next stage of the proof, we note that by using the relation $\nabla_g (u^{p/2})=(p/2) u^{p/2-1}\nabla_g u$, the H\"older inequality, and the well-known fact that $C(n,1,2)$ is not achieved, it is also possible and perhaps easier to see that the sharp constant $C(n,m,p)$ is not achieved if $p >2$. In our argument above, we introduce a new idea, which crucially depends on \eqref{eq:limit0} and \eqref{eq:limit00}, to obtain the same result for any $p>1$ regardless of $C(n,1,2)$.

We now move to the second part of the proof. We shall prove the sharpness of $C(n,m,p)$ given in \eqref{eqConstantC} in the next subsection.

\subsection{The sharpness of $C(n,m,p)$}

It remains to check the sharpness of the constant $C(n,m,p)$. To do this, we will construct a function $u$ in such a way that $\|\nabla^m_g u\|_p/ \|u\|_p$ approximates $C(n,m,p)^{-1}$. Observe from \eqref{eq:definitionofF} that 
$$n\omega_n (\sinh F(s))^{n-1}\geqslant (n-1) s$$ 
for any $s\geqslant 0$ and
\[
\lim_{s\to +\infty} \frac{n\omega_n (\sinh F(s))^{n-1}}{(n-1) s} = 1.
\]
Hence, for any $\varepsilon >0$, there is $s_0$ such that
\[
(n-1) s \leqslant n\omega_n (\sinh F(s))^{n-1} \leqslant (1+\varepsilon) (n-1) s
\]
for all $s \geqslant s_0$. For any $R > s_0$, let us construct a positive, continuous, non-increasing function $f_R$ on $[0,+\infty)$ given by
\begin{equation}\label{eq:FunctionF_R}
f_R(s) = 
\begin{cases}
s_0^{-1/p}&\mbox{if $s\in (0,s_0)$},\\
s^{-1/p} &\mbox{if $s\in [s_0,R)$},\\
R^{-1/p} \max\{2- s/R, 0\} &\mbox{if $s\geqslant R$}.
\end{cases}
\end{equation}
Then we define two sequences of functions $\{v_{R,i}\}_{i \geqslant 0}$, $\{g_{R,i}\}_{i \geqslant 1}$ as follows: 
\begin{itemize}
  \item[(i)] first we set $v_{R,0} = f_R$; 
  \item[(ii)] then in terms of $v_{R,i}$, we define $g_{R,i+1}$ as the maximal function of $v_{R,i}$, that is
\[
g_{R,i+1}(s) = \frac1s \int_0^s v_{R,i}(t) dt;
\]
  \item[(iii)] and finally in terms of $g_{R,i+1}$ we define $v_{R,i+1}$ as follows
\[
v_{R,i+1}(s) = \int_s^{+\infty}\frac{t g_{R,i+1}(t)}{(n\omega_n (\sinh F(t))^{n-1})^2} dt,
\]
for $i=0, 1,2,...$ 
\end{itemize}
Note that $v_{R,i}$ and $g_{R,i}$ are non-increasing functions. We can explicitly compute the function $g_{R,1}$ as follows: When $s < R$ we have
\[
g_{R,1}(s) =
\begin{cases}
s_0^{-1/p}&\mbox{if $s\in (0,s_0)$}\\
p's^{-1/p} -s_0^{1-1/p}/((p-1)s) &\mbox{if $s\in [s_0,R)$,}
\end{cases}
\]
while for $s\in [R,2R)$ we have
\[
g_{R,1}(s) = \bigg(\Big(p'-\frac 32\Big) R^{1-1/p} - \frac{s_0^{1-1/p}}{p-1}\bigg)\frac1s +2R^{-1/p} -\frac{R^{-1-1/p} s}2,
\]
and finally when $s \geqslant 2R$ we have
\[
g_{R,1}(s) = \bigg(p' R^{1-1/p} - \frac{s_0^{1-1/p}}{p-1}\bigg)\frac1s + \frac{R^{1-1/p}}{2s}.
\]
Note that
\[
\int_R^{+\infty} g_{R,1} (s)^p ds \leqslant C
\]
for some constant $C >0$ independent of $R$

In the sequel, we use $C$ to denote various constants which are independent of $R$ and whose values can change from line to line and even in one line if no confusion occurs. We will need the following result.

\begin{proposition}\label{decomposefunction}
For any $i\geqslant 1$, there exist functions $h_{R,i}$ and $w_{R,i}$ such that 
$$v_{R,i} = h_{R,i} + w_{R,i},$$ 
that
\[
\int_0^{+\infty} |w_{R,i}|^p ds \leqslant C
\] 
and that
\[
\frac1{(1+\varepsilon)^{2i}} \Big(\frac{pp'}{(n-1)^2}\Big)^i f_R \leqslant h_{R,i} \leqslant \Big(\frac{pp'}{(n-1)^2}\Big)^i f_R.
\]
\end{proposition}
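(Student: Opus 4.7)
The plan is to induct on $i$, with the two key computational ingredients being the two-sided estimate $(n-1)s \leqslant n\om_n(\sinh F(s))^{n-1} \leqslant (1+\varepsilon)(n-1)s$ valid for $s \geqslant s_0$, together with the Hardy-type maximal bound \eqref{eq:Hardyinequality}. The intuition is that one step of the recursion (one maximal function followed by one integral) multiplies the principal part by a factor very close to $pp'/(n-1)^2$, while generating remainders that decay like $1/s$ on $[s_0, R]$ and are therefore uniformly bounded in $L^p$ by $\int_{s_0}^\infty s^{-p}\,ds < \infty$.

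\textbf{Base case $i=1$.} Using the explicit piecewise formula for $g_{R,1}$ computed just above the proposition, one identifies its dominant piece on $[s_0, R)$ as $p' t^{-1/p}$ and treats the remaining summands (the $-s_0^{1-1/p}/((p-1)t)$ correction on $[s_0, R)$, the pieces living on $(0, s_0)$, $[R, 2R)$, and $[2R, \infty)$) as lower-order. Plugging into the definition of $v_{R,1}$ and using the sinh-bound on $[s_0, R)$ yields
\[
\frac{1}{(1+\varepsilon)^2}\,\frac{pp'}{(n-1)^2}\, s^{-1/p} \;\leqslant\; \int_s^R \frac{t\cdot p' t^{-1/p}}{\big(n\om_n(\sinh F(t))^{n-1}\big)^2}\,dt \;\leqslant\; \frac{pp'}{(n-1)^2}\, s^{-1/p},
\]
up to a tail term $\tfrac{pp'}{(n-1)^2}R^{-1/p}$. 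Set $h_{R,1}$ equal to the resulting principal part and $w_{R,1}=v_{R,1}-h_{R,1}$. Each piece of $w_{R,1}$ is either bounded by a multiple of $1/s$ on $[s_0, R)$, or lives on $(0, s_0)$ or $[R, \infty)$ where one checks boundedness directly; summing gives $\|w_{R,1}\|_p \leqslant C$ uniformly in $R$.

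\textbf{Inductive step.} Assume the decomposition holds at level $i$. Write
\[
g_{R,i+1}(s) = \frac{1}{s}\int_0^s h_{R,i}(t)\,dt + \frac{1}{s}\int_0^s w_{R,i}(t)\,dt.
\]
The second summand is the maximal function of $w_{R,i}$, so \eqref{eq:Hardyinequality} gives its $L^p$-norm bounded by $p'\|w_{R,i}\|_p \leqslant C$. The first summand, by the same bookkeeping that produced $g_{R,1}$, splits as a dominant part pointwise trapped between $\frac{1}{(1+\varepsilon)^{2i}}(pp'/(n-1)^2)^i p' f_R$ and $(pp'/(n-1)^2)^i p' f_R$ (on the region $[s_0, R)$), plus a $C/s$-type remainder. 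Feeding this into $v_{R,i+1}$ and applying the sinh-bound once more multiplies the principal part by a factor in the interval $[\,p/((1+\varepsilon)^2(n-1)^2),\; p/(n-1)^2\,]$, so the product matches the required window $[(1+\varepsilon)^{-2(i+1)},1]\cdot(pp'/(n-1)^2)^{i+1}$ exactly. All new remainders are again $C/s$-type on $[s_0, R)$ and bounded elsewhere, so they accumulate to a $w_{R,i+1}$ with $\|w_{R,i+1}\|_p \leqslant C$ (the constant allowed to depend on $i$).

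\textbf{Main obstacle.} The delicate point is ensuring the pointwise lower bound for $h_{R,i}$ persists on the regions where the sinh-upper bound fails, namely $[0, s_0)$ and $[R, 2R)$. On $[0, s_0)$ one uses that $v_{R,i}$ is non-increasing, so $v_{R,i}(s) \geqslant v_{R,i}(s_0)$, which is already at the scale of $s_0^{-1/p} = f_R(s)$; the difference is absorbed into $w_{R,i}$ and is bounded on a set of finite measure. On $[R, 2R)$ one checks that after subtracting an $L^p$-bounded correction, what remains of $v_{R,i}(s)$ is still at least a constant multiple of $R^{-1/p}(2-s/R) = f_R(s)$, using the explicit form of the integrand together with monotonicity. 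Throughout, the crucial point justifying uniform-in-$R$ bounds on all remainders is that every lower-order term has the form $C/s$ on $[s_0, R]$ and $p > 1$ makes $\int_{s_0}^\infty s^{-p}\,ds$ finite.
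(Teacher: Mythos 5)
Your overall strategy---an explicit region-by-region computation for $i=1$ followed by induction on $i$---is the same as the paper's, and your treatment of the principal part is sound: since $h_{R,i}$ is sandwiched between multiples of $f_R$ and both the maximal function and the integral $\int_s^{+\infty}\frac{t\,(\cdot)}{(n\om_n(\sinh F(t))^{n-1})^2}\,dt$ are positivity-preserving, the principal part at level $i+1$ is sandwiched between the corresponding multiples of $v_{R,1}$, and the base case applies. (Your ``main obstacle'' paragraph is also largely a non-issue: on $(0,s_0)$ and $[R,\infty)$ one may simply \emph{define} $h_{R,i}$ to equal, say, the upper envelope $(pp'/(n-1)^2)^i f_R$ and dump $v_{R,i}-h_{R,i}$ into $w_{R,i}$; the only thing to verify there is the $L^p$ bound on $v_{R,i}$ over those regions.)

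The genuine gap is in how the remainder $w_{R,i}$ propagates through the inductive step. You correctly control the maximal function $\frac1s\int_0^s w_{R,i}$ in $L^p$ via Hardy's inequality, but this function must then be pushed through the second operator $Sv(s)=\int_s^{+\infty}\frac{t\,v(t)}{(n\om_n(\sinh F(t))^{n-1})^2}\,dt$ to contribute to $v_{R,i+1}$, and you assert that ``all new remainders are again $C/s$-type.'' That is false for this term: $\frac1s\int_0^s w_{R,i}$ is merely an $L^p$ function with no pointwise structure, so its image under $S$ is not pointwise $O(1/s)$, and you have supplied no bound for $\|S(M w_{R,i})\|_p$. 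What is needed is the $L^p$-boundedness of the composed operator $T=S\circ M$ (or of $S$ alone), and this is precisely the point the paper handles differently: it observes that $\overline{Tw}$ solves $-\Delta_g \overline{Tw}=\overline{w}$ on $\mathbb H^n$, so the second-order Poincar\'e inequality already established in the first part of the proof gives $\|Tw_{R,i}\|_{L^p(0,\infty)}\leqslant C\|w_{R,i}\|_{L^p(0,\infty)}$. Alternatively one could prove directly, by the integration-by-parts argument of \eqref{eq:case2} together with $n\om_n(\sinh F(s))^{n-1}\geqslant (n-1)s$, that $S$ is bounded on $L^p$. Either way, this step must be stated and justified; without it the induction does not close.
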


\begin{proof}
Let us define the operator $T$ acting on functions $v$ on $[0,+\infty)$ by
\[
( Tv) (s) = \int_s^{+\infty} \frac{r}{(n\omega_n (\sinh F(r))^{n-1})^2} \Big(\frac1r \int_0^r v(t) dt\Big) dr.
\]
For simplicity, for each function $v$ on $[0,+\infty)$ we define an associated function $\overline{v}$ on $\mathbb H^n$ by 
\[
\overline{v}(x) = v(|B(0,d(0,x))|).
\]
With these notations, it is not hard to see that
\[
\|\o{w_{R,i}}\|_p = \Big(\int_0^{+\infty} |w_{R,i}(s)|^p ds\Big)^{1/p}
\]
for any $i\geqslant 1$ and 
$$-\Delta_g \o{Tw_{R,i}}(x) = \o{w_{R,i}}(x)$$ 
for any $x\in \mathbb H^n$. Hence, by the Poincar\'e inequality, we have
\[
\int_0^{+\infty} |Tw_{R,i}(s)|^p ds = \|\o{Tw_{R,i}}\|_p^p \leqslant C \|\o{w_{R,i}}\|_p^p = C \Big(\int_0^{+\infty} |w_{R,i}(s)|^p ds\Big)^{1/p}.
\]
Thus, using an induction argument, it is enough to prove this proposition for $i=1$. We will perform several explicit estimation for the function $v_{R,1}$. Note that for $s\geqslant s_0$ we have
\[
(n-1) s \leqslant n\omega_n (\sinh F(s))^{n-1} \leqslant (1+\varepsilon) (n-1) s.
\]

\noindent\textbf{Estimate of $v_{R,1}$ when $s \geqslant 2R$}. Clearly for $s \geqslant 2R$, we have
\begin{align*}
v_{R,1}(s)& \leqslant \frac1{(n-1)^2}\int_s^{+\infty} \frac{(p'+1/2)R^{1-1/p} -s_0^{1-1/p}/(p-1)}{t^2} dt\\
&=\frac1{(n-1)^2}\frac{(p'+1/2)R^{1-1/p} -s_0^{1-1/p}/(p-1)}{s}
\end{align*}
and similarly we have
\[
v_{R,1}(s) \geqslant \frac1{(1+\varepsilon)^2(n-1)^2}\frac{(p'+1/2)R^{1-1/p} -s_0^{1-1/p}/(p-1)}{s}.
\]
Thus an easy calculation shows that
\begin{equation}\label{eqEstimateVOut2R}
\int_{2R}^{+\infty} v_{R,1}(s)^p ds \leqslant C.
\end{equation}
 
\noindent\textbf{Estimate of $v_{R,1}$ when $R \leqslant s < 2R$}. For $s\in [R,2R)$, we first write
\[
v_{R,1}(s) = v_{R,1}(2R) + \int_s^{2R} \frac{t g_{R,1}(t)}{(n\omega_n (\sinh F(t))^{n-1})^2}dt.
\]
Then we can estimate
\[\begin{split}
v_{R,1}(2R) +& \frac1{(1+\varepsilon)^2(n-1)^2}\int_s^{2R} \frac{g_{R,1}(t)}t dt \\
&\leqslant v_{R,1}(s) \leqslant v_{R,1}(2R)+ \frac1{(n-1)^2}\int_s^{2R} \frac{g_{R,1}(t)}t dt.
\end{split}\]
Note that $v_{R,1}(2R)$ is equivalent to $R^{-1/p}$ and
\begin{align*}
\int_s^{2R} \frac{g_{R,1}(t)}t dt =&\Big(\Big(p'-\frac32\Big) R^{1-1/p} - \frac{s_0^{1-1/p}}{p-1}\Big) \Big(\frac1s-\frac1{2R}\Big)\\
& +2R^{-1/p} \ln \frac{2R}s -\frac{R^{-1-1/p} (2R-s)}2.
\end{align*}
This shows that
\begin{equation}\label{eqEstimateVInR2R}
\int_R^{2R} v_{R,1}(s)^p ds \leqslant C
\end{equation}
and that $v_{R,1}(R)$ is equivalent to $R^{-1/p}$. Combining the estimates \eqref{eqEstimateVOut2R} and \eqref{eqEstimateVInR2R} gives $\int_R^{+\infty} v_{R,1}(s)^p ds \leqslant C$. 

\noindent\textbf{Estimate of $v_{R,1}$ when $s_0 \leqslant s < R$}. For $s\in [s_0,R)$, we also write
\[
v_{R,1}(s) = v_{R,1}(R) + \int_s^{R} \frac{t g_{R,1}(t)}{(n\omega_n (\sinh F(t))^{n-1})^2}dt.
\]
Thus
\[\begin{split}
v_{R,1}(R)+ & \frac1{(1+\varepsilon)^2(n-1)^2}\int_s^{R} \frac{g_{R,1}(t)}t dt \\
&\leqslant v_{R,1}(s) \leqslant v_{R,1}(R)+ \frac1{(n-1)^2}\int_s^{R} \frac{g_{R,1}(t)}t dt.
\end{split}\]
A simple computation gives
\[
\int_s^{R} \frac{g_{R,1}(t)}t dt = pp'(s^{-1/p} -R^{-1/p}) -\frac{s_0^{1-1/p}}{p-1} \Big(\frac1s -\frac1R\Big),
\]
which implies that
\[
\int_{s_0}^R \Big|\int_s^{R} \frac{g_{R,1}(t)}t dt - \frac{pp'}{s^{1/p}}\Big|^p ds \leqslant C.
\]

\noindent\textbf{Estimate of $v_{R,1}$ when $s <s_0$}. For $s \in (0,s_0)$ we write
\[
v_{R,1}(s) = v_{R,1}(s_0) + \int_s^{s_0} \frac{t s_0^{-1/p}}{(n\omega_n (\sinh F(t))^{n-1})^2}dt,
\]
therefore
\[
|v_{R,1}(s)|\leqslant C(R^{-1/p} + s_0^{2/n -1/p}).
\]
Consequently, we can write $v_{R,1} = h_{R,1} + w_{R,1}$ with $\int_0^{+\infty} |w_{R,1}|^p ds \leqslant C$ for some constant $C$ independent of $R$ and 
\[
\frac1{(1+\varepsilon)^2} \frac{pp'}{(n-1)^2} f_R \leqslant h_{R,1} \leqslant \frac{pp'}{(n-1)^2} f_R.
\]
(The way to see this is as follows: Since $\int_R^{+\infty} v_{R,1}(s)^p ds \leqslant C$, we can choose 
$$h_{R,1} = pp' (n-1)^{-2} f_R$$ 
when $r \geqslant R$. When $r < s_0$, we choose the same function for $h_{R,1}$. When $s_0 \leqslant r < R$, we choose 
$$h_{R,1} (s) = pp'(n-1)^{-2}/s^{1/p}$$ 
with a remark that $f_R (s) = s^{-1/p}$ in this scenario.) This finishes our proof of the proposition.
\end{proof}

We are now in position to confirm the sharpness of $C(n,m,p)$. For clarity, we split our proof into several small steps.

\subsubsection{The sharpness of $C(n,1,p)$}

We set 
$$u_R(x) =f_R(|B(0,d(0,x))|).$$ 
It is not hard to see that $u_R \in W^{1,p}(\mathbb H^n)$. We also consider the function $k_R$ defined by
\[
k_R(\varphi (s)) \varphi '(s) = f_R'(s).
\]
To finish our proof, we shall compute $\|\nabla_g u_R\|_p/\|u_R\|_p$. Indeed, we use \eqref{eqIDENTITY} to get
\[
\int_{\mathbb H^n} u_R(x)^p dV_g = \int_0^{+\infty} f_R(s)^p ds = 1 + \ln R -\ln s_0 + \int_0^1 (1-s)^p ds.
\]
For the gradient term, we observe that
\begin{align*}
\int_{\mathbb H^n} |\nabla_g u_R(x)|^p dV_g =& \int_0^{+\infty} k_R(s)^p ds\\
=&-\int_0^{+\infty} k_R(\varphi (s))^p \varphi '(s) ds\\
=&\int_0^{+\infty} (f_R'(s))^p (-\varphi '(s))^{1-p} ds\\
\leqslant &\frac{(n-1)^p}{p^p}(1+\varepsilon)^p \int_{s_0}^R s^{-1} ds \\
&+(n-1)^p (1+\varepsilon)^p R^{-p-1}\int_R^{2R} s^p ds \\
=&\frac{(n-1)^p}{p^p}(1+\varepsilon)^p (\ln R -\ln s_0) \\
&+ (n-1)^p (1+\varepsilon)^p \int_0^1 (1+s)^p ds.
\end{align*}
Hence
\[
\inf_{u\in W_0^{1,p}(\mathbb H^n) \backslash \{0\}} \frac{\int_{\mathbb H^n} |\nabla_g u|^p dV_g}{\int_{\mathbb H^n} |u|^p dV_g} \leqslant \liminf_{R\to +\infty} \frac{\int_{\mathbb H^n} |\nabla_g u_R|^p dV_g}{\int_{\mathbb H^n} |u_R|^p dV_g} \leqslant \frac{(n-1)^p}{p^p}(1+\varepsilon)^p.
\]
Since $\varepsilon >0$ is arbitrary, we obtain 
\[
\inf_{u\in W_0^{1,p}(\mathbb H^n)\backslash \{0\}} \frac{\int_{\mathbb H^n} |\nabla_g u|^p dV_g}{\int_{\mathbb H^n} |u|^p dV_g} \leqslant \Big(\frac{n-1}{p}\Big)^p.
\]
Hence the preceding inequality becomes equality. This proves the sharpness of $C(n,1,p)$. Next, we move to a proof for the sharpness of $C(n,2,p)$.

\subsubsection{The sharpness of $C(n,2,p)$}

In this case, we set 
$$u_R(x) = v_{R,1}(|B(0,d(0,x))|),$$ 
then we have 
\[
-\Delta_g u_R(x) = f_R(|B(0,d(0,x))|).
\]
Again, we shall compute $\|\Delta_g u_R\|_p/\|u_R\|_p$. Using this fact and \eqref{eqIDENTITY}, we easily obtain
\begin{equation}\label{eqIntegralDeltaU}
\int_{\mathbb H^n} |\Delta_g u_R|^p dV_g = \int_0^{+\infty} f_R(s)^p ds = 1+ \ln (R/s_0) + \int_0^1 (1-s)^pds.
\end{equation}
By Proposition \ref{decomposefunction}, we have
\begin{align*}
\|u_R\|_{p} &= \Big(\int_0^{+\infty} v_{R,1}(s)^p ds\Big)^{1/p} \\
&\geqslant \Big(\int_0^{+\infty} h_{R,1}(s)^p ds\Big)^{1/p} - \Big(\int_0^{+\infty} |w_{R,1}|^p ds\Big)^{1/p}\\
&\geqslant \frac1{(1+\varepsilon)^2} \frac{pp'}{(n-1)^2}\Big(\int_0^{+\infty} f_R(s)^p ds\Big)^{1/p} -C\\
&= \frac1{(1+\varepsilon)^2} \frac{pp'}{(n-1)^2} \Big(1 + \ln \big( \frac R{s_0} \big) + \int_0^1(1-t)^p dt\Big)^{1/p} -C.
\end{align*}
Combing this estimate and \eqref{eqIntegralDeltaU} gives
\[
C(n,2,p) \geqslant \liminf_{R\to +\infty} \frac{\|u_R\|_p}{\|\Delta_g u_R\|_p} \geqslant \frac1{(1+\varepsilon)^2}\frac{pp'}{(n-1)^2}.
\]
Since $\varepsilon >0$ is arbitrary, we conclude that
\[
C(n,2,p) \geqslant \frac{pp'}{(n-1)^2}
\]
and this finishes our proof for the case $m=2$.

\subsubsection{The sharpness of $C(n,2k,p)$ with $k\geqslant 2$} 

In this case, we set 
\[
u_R(x) = v_{R,k}(|B(0,d(0,x))|).
\]
Then it is clear to see that 
$$(-\Delta_g)^k u_R(x) = f_R (|B(0,d(0,x))|).$$ 
By Proposition \ref{decomposefunction}, we can write $v_{R,k} = h_{R,k} + w_{R,k}$ with $\int_0^{+\infty} |w_{R,k}|^p ds \leqslant C$ and 
\[
\frac1{(1+\varepsilon)^{2k}} \Big(\frac{pp'}{(n-1)^2}\Big)^k f_R \leqslant h_{R,k} \leqslant \Big(\frac{pp'}{(n-1)^2}\Big)^k f_R.
\]
Using a similar argument as in proving the sharpness of $C(n,2,p)$, we obtain the sharpness of $C(n,2k,p)$.

\subsubsection{The sharpness of $C(n,2k+1,p)$ with $k\geqslant 1$} 

In the previous argument, we can find a function $u_R$ on $\mathbb H^n$ such that 
$$(-\Delta_g)^k u_R(x) = f_R(|B(0,d(0,x))|)$$
and that
\[
\|u_R\|_p \geqslant \frac1{(1+\varepsilon)^{2k}} \Big(\frac{pp'}{(n-1)^2}\Big)^k \Big(\int_0^{+\infty} f_R(s)^p ds\Big)^{1/p}- C.
\]
From the proof of the sharpness of $C(n,1,p)$, we know that
\[
\int_{\mathbb H^n} |\nabla_g (\Delta_g^k u_R) |^p dV_g\leqslant \Big( \frac{ n-1 }{p} \Big)^p (1+\varepsilon)^p \ln \big(\frac R{s_0}\big) + (n-1)^p (1+\varepsilon)^p \int_0^1(1-t)^p dt.
\]
Combining these two estimate implies the sharpness of $C(n,2k+1,p)$ as claimed.


\section*{Acknowledgments}

V.H.N would like to acknowledge the support of the CIMI postdoctoral research fellowship. The research of Q.A.N is funded by the VNU University of Science under project number TN.16.01 and the Vietnam National Foundation for Science and Technology Development (NAFOSTED) under grant number 101.02-2016.02.

\section*{A note added}

After announcing our work on arXiv, see \cite{NgoNguyen16b}, it has come to our attention that the sharpness of $C(n,1,p)$ can be realized by a different argument by considering the upper half space model for $\mathbb H^n$, see \cite{BAGG17}.


\end{document}